\newtheorem{defn0}{Definition}[section]
\newtheorem{prop0}[defn0]{Proposition}
\newtheorem{thm0}[defn0]{Theorem}
\newtheorem{lemma0}[defn0]{Lemma}
\newtheorem{corollary0}[defn0]{Corollary}
\newtheorem{example0}[defn0]{Example}
\newtheorem{remark0}[defn0]{Remark}
\newtheorem{conjecture0}[defn0]{Conjecture}
\newtheorem{claim0}[defn0]{Claim}
\newenvironment{definition}{ \begin{defn0}}{\end{defn0}}
\newenvironment{proposition}{\bigskip \begin{prop0}}{\end{prop0}}
\newenvironment{theorem}{\bigskip \begin{thm0}}{\end{thm0}}
\newenvironment{lemma}{\bigskip \begin{lemma0}}{\end{lemma0}}
\newenvironment{corollary}{\bigskip \begin{corollary0}}{\end{corollary0}}
\newenvironment{remark}{ \begin{remark0}\rm}{\end{remark0}}
\newcommand{\propref}[1]{Proposition~\ref{#1}}
\newcommand{\thmref}[1]{Theorem~\ref{#1}}
\newcommand{\corref}[1]{Corollary~\ref{#1}}
\def\cocoa
\newcommand{\m}{\mathfrak{m}}
\newcommand{\n}{\mathfrak{n}}
\newcommand{\emdim}{{\rm emdim}}
\newcommand{\depth}{{\rm depth}}
\newcommand{\socle}{{\rm socle}}
\newcommand{\spec}{{\rm Spec}}
\def\Hilb{{{\mathcal H}\kern -0.25ex{\it ilb\/}}}
\title{\bf \huge Poincar\'{e} series and deformations of  Gorenstein local algebras  with low socle degree
\footnote{ 2000 {\it Mathematics Subject Classification}. Primary
13D40; Secondary 13H10;
\newline
\indent \ \ {\it Key words and Phrases:} Gorenstein, Artinian,
 Poincar\'{e} series, smoothable}}
\author{\large   G. Casnati
\and \large J. Elias
\thanks{Partially supported by  MTM2007-67493, Acci\'{o}n Integrada Espa\~{n}a-Italia 07-09}
\and \large R. Notari
\and \large M. E. Rossi
\thanks{Partially supported by M.I.U.R.: PRIN 07-09, Azione Integrata Italia-Spagna 07-09 }
}
\date{\today}
\begin{document}

\maketitle

\begin{abstract}
Let $K$ be an algebraically closed field of characteristic $0$, and let $A $ be an Artinian  Gorenstein local commutative and Noetherian  $K$--algebra, with  maximal ideal $\m$.
 In the present paper we prove a structure theorem describing such kind of $K$--algebras satisfying $\m^4=0$. We use this result   in order to prove that  such a $K$--algebra $A$ has rational  Poincar{\'e} series and it is always smoothable in any embedding dimension, if $\dim_K \m^2/\m^3 \le 4$.
 We also prove that the generic Artinian Gorenstein local $K$--algebra with socle degree three has rational Poincar{\'e} series, in spite of the fact that such algebras are  not necessarily smoothable.
\end{abstract}

\bigskip
\section{Introduction}

Throughout the whole paper  by $K$--algebra we will mean a local commutative Noetherian  $K$--algebra. When $A$ is a local ring we will usually denote by $\m$ its unique maximal ideal. We always assume that $K$ is algebraically closed of characteristic zero.
The socle degree of an Artinian algebra $A$ is the integer $s$ such that $\m^s \neq 0,$ but $\m^{s+1}=0$.

In  the main result of this paper we present a structure theorem for the  Artinian Gorenstein $K$--algebras $A$ of socle degree $3$, \thmref{structure}. The key ingredients of the proof  are some well--known facts about the classical Macaulay's correspondence and a recent result of J. Elias and M.E. Rossi (see  \cite{ER}).
From this result we get some applications to the rationality of the Poincar{\'e} series
$$
{ {P}}_A(z):=\sum_{i\ge 0}\dim_KTor_i^A(K, K)z^i
$$
and the smoothability of $\spec(A)$ in the Hilbert scheme $\Hilb_d({\mathbb A}^h)$
parameterizing punctual subschemes of multiplicity  $d$ of the affine space of dimension $h$ over $K$.

Actually the  results on the Poincar\'{e} series of an Artinian  Gorenstein local ring could  be stated in any dimension $r.$ In fact, let $(A, \m) $ be a Gorenstein local ring of dimension $r $ and let  $J$ be an ideal generated by a minimal reduction of $\m$, see \cite{BH93}.  Then by \cite{Sch}, Satz 1,
$$P_A(z) =(1+z)^r P_{A/J}(z) $$
where $A/J $ is Artinian, still  Gorenstein,  whose length is the multiplicity of $A.$

Notice that, when $\m^r=0$ for $r=1,2$, it is completely trivial to prove the rationality of $P_A(z)$ and the smoothability in $\Hilb_d({\mathbb A}^h)$ since, in those cases, $A$ has multiplicity either $1$ or $2$. When $\m^3=0$ both such properties again hold  true for Artinian Gorenstein $K$--algebras of arbitrary multiplicity $d$ (see \propref{cube}).

J.P. Serre in \cite{serre} conjectured the rationality of ${ {P}}_A(z) $ and proved it when $A$ is a regular local ring. The problem has   motivations in commutative and non-commutative algebra and over the last 30 years has been  a topic of much current interest. Despite many interesting results showing the rationality, J. Anick (see \cite{A}) has given an example of an Artinian algebra $A$  such that $\m^3=0$ with transcendental    Poincar{\'e} series.     G. Sj\"odin proved that all Artinian Gorenstein   rings with $\m^3 =0$  have a rational Poincar{\'e} series
(see \cite{Soj79}), R. B{\o}gvad proved that there exist   Artinian Gorenstein  local rings with $\m^4=0 $   and  transcendental Poincar{\'e} series (see \cite{Bog83}).
Nevertheless,  several results show that large classes of Gorenstein local rings  have a good behaviour with respect to this problem. We mention the local rings which are complete intersections (see \cite{tate}), Gorenstein rings with $\emdim(A) -4 \le  \depth (A) $ (see \cite{AKM} and \cite{JKM}), stretched and almost stretched Gorenstein rings (see \cite{sally2}, \cite{EV1, EV2}), i.e. such that $\dim_K  \m^2/\m^3 \le 2,  $   Gorenstein local rings such that $\dim_K \m^2/\m^3 =3 $ and $\m^4=0$ (see \cite{CN3}).

In Section 3  we deal with the case of an Artinian Gorenstein local $K$--algebra $A$ such that $\m^4=0  $    giving  necessary and sufficient conditions for being ${ {P}}_A(z)  $  rational.  In particular we prove  that $A$ has rational Poincar{\'e} series if  $\dim_K \m^2/\m^3 \le 4 $  (see Corollary \ref{n4}).

In \cite{Mazzola1}, \cite{Mazzola2} and  \cite{CEVV} it is proved there that $\Hilb_d(\mathbb A^h)$ is irreducible if either $d\le7$ or $d=8$ and $h\le3$ and it has exactly two components when $d=8$ and $h\ge4$. In particular each Artinian $K$--algebra $A$ of length $d\le7$ is smoothable i.e. can be flatly deformed to the trivial $K$--algebra $K^d$.
In \cite{CN2} and \cite{CN4} the attention is focused on the problem of smoothability of Gorenstein algebras in $\Hilb_d(\mathbb A^h)$.
The locus $\Hilb_d^G(\mathbb A^h)\subseteq\Hilb_d(\mathbb A^h)$ of such $K$--algebras contains all the algebras isomorphic to the trivial $K$--algebra $K^d$, thus it makes sense to ask when $\Hilb_d^G(\mathbb A^h)$ is irreducible and what kind of algebras  it contains. In the quoted papers it is proved that this is actually true if $d\le10$ whereas in \cite{mattone} the authors prove the reducibility of  $\Hilb_d^G(\mathbb A^h)$ when $d\ge14$ asserting the existence of a non smoothable Artinian Gorenstein local $K$--algebra with Hilbert function $(1, 6,6, 1)$ (see also \cite{CN4} Section 4 for an explicit example).

In Section 4, starting from the structure theorem for Artinian Gorenstein $K$--algebras $A$ with $\m^4=0$, proved in Section 2,  we achieve some results concerning the smoothability of $\spec(A)$ in the Hilbert scheme $\Hilb_d({\mathbb A}^h)$.
This problem can be reduced to the Artinian Gorenstein graded  algebras with Hilbert function $(1,h, h, 1)$ which have been well treated in the literature.
In Corollary \ref{n8} we prove  that $A$ is smoothable if  $\dim_K \m^2/\m^3 \le 4 $.

\bigskip
\section{Structure theorem of Artin Gorenstein algebras with socle degree three}

An Artinian  Gorenstein  algebra $A$ with maximal ideal $\m$ over a field $K$ is self dual, that is there exists an exact pairing from $A \times A $ to $K$ making $A$ isomorphic as $A$--module to $Hom_K(A,K).$

The Hilbert function of $A$ is by definition the Hilbert function of the associated graded algebra $G= gr_{\m}(A) := \bigoplus_{i \ge 0} \m^i/\m^{i+1}$ (by definition $\m^0=A$), i.e.
$$
HF_A(i) =   \dim_K \ \m^i /\m^{i+1}.
$$
In the graded case,   the Hilbert function of an Artinian Gorenstein algebra is symmetric.
Little is  known about the Hilbert function in the local case. The problem comes from the fact that  the associated  graded algebra  $ G$ is in general no longer  Gorenstein.

Nevertheless  A. Iarrobino in \cite{IAMS} proved interesting results  even in the local case.  The information comes from a stratification of $G$ by a descending sequence of ideals
$$G=C(0) \supset C(1) \supset \dots, $$ whose successive quotient $Q(a) =C(a)/C(a+1) $ are reflexive $G$-modules. This reflexivity property imposes conditions on the Hilbert function $HF_A. $ If $A$ has socle degree $s$, then $HF_A$ is a sum of symmetric functions $ HF_{Q(a)}$ with respect  $ \frac{s-a}{2}$.

The first subquotient $Q(0) $ of $G$  is always a graded Gorenstein algebra and it is the unique (up to isomorphism) quotient of $G$ with socle degree $s.$   If necessary, we will write $Q_A(0).$
A. Iarrobino proved that if $HF_A$ is symmetric, then $G= Q_A(0) $ and it is Gorenstein.
 Hence (see \cite{IAMS} Proposition 1.7 and \cite{E} Proposition 7)
{ { $G$  is Gorenstein  if  and only if  $HF_A$ is symmetric, equivalently if   $G=Q_A (0).  $} }

For example, if $A$ is a Gorenstein local $K$--algebra with socle degree  three and embedding dimension $h $ ($=HF_A(1)$),  then we deduce that $HF_A(2)=n \le h  $  and clearly $HF_A(3)=1.$ In this case  we will write that $A$ has Hilbert function $ (1, h, n, 1)$; notice that  $Q_A(0) $ has Hilbert function  $(1, n, n, 1).$

 Macaulay's inverse system has an important role in the  study   of  Artinian  local $K$--algebras.
The reader should refer to \cite{E},  \cite{IAMS} and \cite{ER} for an extended treatment. If $R$ is a local ring over a field $K, $ we may regard the dual module $Hom_K(A,K) $ of a quotient $A=R/I $ as a submodule of the injective envelope $Hom_K(R,K). $

If $R= K[\![x_1,\dots x_h]\!]$ is a power series ring   with maximal ideal $\n =(x_1,\cdots,x_h), $ then, being $K$  a characteristic zero field,
$Hom_K(R,K) $ is a divided power ring $S=K[y_1, \dots, y_h] $ and we get  an explicit description of the duality.

Since $A$, being Artinian, is complete with respect to the $\m$--adic topology, hence we may assume $A$ is a quotient of $R. $   \vskip 2mm

  It is known that
$S $ has a structure of   $R $--module by means the following action
$$
\begin{array}{ cccc}
\circ: & R \times S   &\longrightarrow &  S   \\
                       &       (f , g) & \to  &  f  \circ g = f (\partial_{y_1}, \dots, \partial_{y_h})(g)
\end{array}
$$
where $  \partial_{y_i} $ denotes the partial derivative with respect to $y_i.$

J. Emsalem in \cite{E}, Section B, Proposition 2, A. Iarrobino in   \cite{IAMS} Lemma 1.2,
 characterized Artinian local $K$--algebras in terms of suitable $R$--submodules  of $S $ which are finitely generated.

A  quotient local ring $A=R/I $ is an Artinian   Gorenstein  local $K$--algebra of socle degree $s $ if and only if its dual module   is a cyclic $R$--submodule of $S$ generated by a polynomial  $F \in S $ of degree $s $ (see also  \cite{IAMS}, Lemma 1.2.).   If we consider the ideal of $R$
$$
Ann_{R }(F): =\{g \in R  \ |\  g \circ F = 0 \   \}
$$
then $A= R/Ann_{R}(F).$
 \noindent Hence each  Artinian Gorenstein local $K$--algebra of socle $s$ will be equipped with a polynomial $F \in S$ of degree $s.$  We  will write $A=A_F.$ The polynomial $F$ is not unique, but it  is determined up an unit $u$ of $R$.   In the general case, one can translate  in terms of classification and deformation of polynomials, the respective problems of classification and deformation of the corresponding  algebra.
For example, when $A$ is an Artinian Gorenstein  local $K$--algebra of socle  $s=2$ and embedding dimension $h$, we have $A \simeq A_F$ with $F =y_{1}^2+ \dots + y_h^2 \in S$ due to the classification of quadrics up to projectivities.  We have thus the following result.

\begin{proposition}
\label{cube}
An Artinian Gorenstein local $K$--algebra $A$ of embedding dimension $h > 1$ has  socle  $s=2$  if and only if $A\cong R/I$
where
$$
 I=(x_i x_j, x_u^2-x_1^2)_{1\le i<j\le h , \; u=2,\dots, h}.
$$
In this case  $P_A(z) $ is rational and $A$ is smoothable.
\end{proposition}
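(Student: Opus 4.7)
My plan rests on Macaulay's inverse system, which the paper has just described. For the forward direction, the first observation is that the hypothesis $\m^3=0$ forces $A$ to coincide with its associated graded ring: given any $K$-vector space splitting $A=K\oplus(\m/\m^2)\oplus\m^2$, the multiplication respects the grading because $\m\cdot\m^2=\m^3=0$ and $\m^2\cdot\m^2=0$. Thus by Macaulay duality $A\cong R/Ann_R(F)$ with $F$ a homogeneous quadratic form in $y_1,\dots,y_h$. The rank of $F$ must equal $h=\emdim(A)$, since otherwise a suitable linear change of coordinates would exhibit a variable $y_j$ not appearing in $F$, giving $x_j\in Ann_R(F)$ and lowering the embedding dimension. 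Since $K$ is algebraically closed of characteristic zero, the classification of quadratic forms then gives $F=y_1^2+\dots+y_h^2$ up to linear change of coordinates.

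Next I compute $Ann_R(F)$ directly: $x_ix_j\circ F=0$ for $i\ne j$ and $(x_u^2-x_1^2)\circ F=2-2=0$, so the ideal $I$ in the statement is contained in $Ann_R(F)$. A dimension count using that every monomial of degree $\ge 2$ reduces modulo $I$ to a scalar multiple of $x_1^2$ yields $\dim_K R/I=h+2=\dim_K A$, hence $I=Ann_R(F)$. The converse direction is then immediate from the same computation: $R/I$ has Hilbert function $(1,h,1)$ with socle spanned by $x_1^2$, so it is Artinian Gorenstein of embedding dimension $h$ and socle degree $2$.

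For the remaining two assertions, rationality of $P_A(z)$ is a direct application of Sj\"odin's theorem \cite{Soj79}, which gives a rational Poincar\'e series for every Artinian Gorenstein $K$-algebra with $\m^3=0$. Smoothability I would prove by exhibiting an explicit flat deformation: take the family of reduced subschemes of $\mathbb A^h$ consisting, for $t\ne 0$, of the $h+2$ distinct points $0,te_1,\dots,te_h,t(e_1+\dots+e_h)$, and show that the flat limit at $t=0$ is an Artinian local algebra of length $h+2$ supported at the origin with Hilbert function $(1,h,1)$; the structure theorem just proved then identifies this limit with $A$. The main obstacle is precisely this last Hilbert-function check for the flat limit, which requires tracking which quadrics in $x_1,\dots,x_h$ vanish on the scaled point configuration as $t\to 0$. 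For $h+2\le 10$ one can instead sidestep the issue by invoking the irreducibility of $\Hilb_d^G(\mathbb A^h)$ recalled in the introduction.
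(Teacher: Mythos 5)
Your handling of the structure statement and of the rationality claim is correct, and in fact more detailed than the paper's own proof: the paper disposes of the first part with the one--line remark that it follows from the classification of quadrics up to projectivities, and quotes \cite{Soj79} (or \cite{CRV}, Proposition 2.12) for rationality, which is exactly your appeal to Sj\"odin. Your intermediate steps --- that $\m^3=0$ forces $A\cong gr_{\m}(A)$ so the dual generator may be taken to be a homogeneous quadric, that nondegeneracy forces full rank, and the colength count giving $I=Ann_{R}(F)$ --- are all sound.

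The smoothability part, however, is genuinely incomplete. The paper settles it by citing \cite{CN1}, Section 3; you propose instead an explicit degeneration but stop precisely at the step that carries all the content: you do not verify that the flat limit of your point configuration has the asserted Hilbert function, and your fallback via irreducibility of the Gorenstein locus only covers length $d=h+2\le 10$, i.e. $h\le 8$, whereas the statement is for every $h>1$. Moreover, even granting the Hilbert--function check, your identification criterion is too weak: an Artinian local algebra with Hilbert function $(1,h,1)$ need not be Gorenstein (e.g. $K[x,y]/(x^2,xy,y^3)$ has Hilbert function $(1,2,1)$ but two--dimensional socle), and Gorensteinness is an open, not a closed, condition in a finite flat family, so it is not inherited by the limit of reduced fibres for free; without it the uniqueness statement you want to invoke does not apply. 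The gap is fillable along your own lines: for $t\ne 0$ the ideal of your configuration contains $x_i^2-tx_i$ and $x_ix_j-x_kx_l$ ($i<j$, $k<l$), the ideal $(x_i^2,\ x_ix_j-x_kx_l)$ obtained at $t=0$ equals $Ann_{R}\bigl(\sum_{i<j}y_iy_j\bigr)$, which has colength $h+2$, so by flatness it is the full limit ideal, and since the quadric $\sum_{i<j}y_iy_j$ has rank $h$ the special fibre is isomorphic to $A$. As written, though, the smoothability assertion is asserted rather than proved.
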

\begin{proof}
The first part is an immediate consequence of the classification of quadrics up to projectivities (see also \cite{sally2}). For the rationality of $P_A(z)$ see e.g. \cite{Soj79} or \cite{CRV}, Proposition 2.12. For the smoothability see \cite{CN1}, Section 3.
\end{proof}

It is interesting to point out that the  Gorenstein assumption on $A$  is necessary because Anick's example has socle degree $s=2,$   nevertheless $P_A(z) $ is not rational.
It is thus a natural question to ask what happens when $s>2. $

\medskip
\noindent The $G$--module $Q(0) $ will play  a crucial role in this investigation. It  can be computed in terms of the corresponding polynomial in the inverse system. Let $F \in S  $ be a polynomial of degree  $ s $ such that $A=A_F $ and denote by $F_s$ the form of highest degree in $F, $ that is  $F =F_s + \text{terms of lower degree}$,  then  (see  \cite{E} Proposition 7 and \cite{IAMS} Lemma 1.10)
$$
Q(0) \simeq R/Ann_{R}(F_s).
$$

 \vskip 2mm
\noindent We say that a homogenous form $F\in S$ of degree $d$
is {\it{non-degenerate}}  if the $K$--vector space of the derivatives of order $d-1$ has maximal dimension, that is  $h= \dim_K [S]_1$.

Now we focus our attention on the case $s=3$, i.e. the case of Artinian  Gorenstein local algebras $A$ with Hilbert function $(1, h, n, 1)$ with $h \ge n$. Notice that  the Hilbert function of $Q(0)\simeq R/Ann_{R}(F_3) $ is $(1, n, n, 1)$.

\vskip 2mm
From now on we let  $R_j=K[[x_1, \dots, x_j]] $ and $S_j=K[ y_1, \dots, y_j]   $ for every positive integer $j \le h.  $  Hence $R_h=R $ and $S_h=S $ and in this case we will write $R$ and $S.$  In the following  $h, n $ will denote positive integers such that $n \le h.$ We assume $A=R/I$ of embedding dimension $h, $ that is $I \subseteq  \n^2.$

\vskip 2mm
J. Elias and M.E. Rossi proved the following result.
\begin{theorem} [\cite{ER}, Theorem 4.1]
\label{class}
Let $A$ be an Artinian  Gorenstein local $K$--algebra with Hilbert function $(1, h ,n,1), $ then $A\cong R/Ann_{R}(F)$
where
$$
 F =F_3 +y_{n+1}^2+ \dots + y_h^2 \in S
 $$
 with $F_3 $ a  non-degenerate degree three  form  in $S_n$ ($F=F_3$ if $n=h$).
\end{theorem}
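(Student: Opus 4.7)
The plan is to apply the Macaulay duality recalled above: since $A$ is Artinian Gorenstein of socle degree $3$, $A \cong R/Ann_R(F)$ for some $F \in S$ of degree $3$, and we decompose $F = F_3 + F_2 + F_1 + F_0$ into homogeneous components. The two equivalences available are multiplication of $F$ by a unit $u \in R$ (which preserves $Ann_R(F)$) and application of a $K$-algebra automorphism of $R$ (equivalently a linear change in $S$). The Hilbert function datum $HF_A(2) = n$ together with the identification $Q_A(0) = R/Ann_R(F_3)$ recalled above gives $\dim_K[R\circ F_3]_2 = n$ (the embedding dimension of $Q_A(0)$), so $F_3$ depends essentially on $n$ of the $h$ variables. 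After a linear change of coordinates we may assume $F_3 \in S_n$ is non-degenerate.

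Next, replacing $F$ by $u \circ F$ for $u = 1 + v$ with $v \in \n^j$ alters $F$ by $v \circ F = v \circ F_3 + v \circ F_2 + v \circ F_1$, whose top-degree contribution lies in $[R \circ F_3]_{3-j}$. Taking $v \in \n^3$ we add to $F_0$ any element of $[R \circ F_3]_0 = K$ and kill $F_0$. Taking $v \in \n^2$ we add to $F_1$ any element of $[R \circ F_3]_1 = \mathrm{span}(y_1,\ldots,y_n)$ and kill the $S_n$-component of $F_1$. Taking $v$ linear we add to $F_2$ any element of $[R \circ F_3]_2 \subseteq [S_n]_2$, absorbing the corresponding part of $F_2$.

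The residual pieces — the cross terms $y_i y_j$ ($i \le n < j$) in $F_2$, the remaining part of $F_1$ in $\mathrm{span}(y_{n+1},\ldots,y_h)$, and the leftover part of $F_2$ in $[S_n]_2$ not absorbed above — are then cleared using linear changes of variables of the form $y_j \mapsto y_j + \sum_{i \le n}\mu_{ji} y_i$ for $j > n$, which preserve $F_3 \in S_n$ but rearrange the quadratic and linear pieces, combined with further unit multiplications. An iterative descent on a filtration of $R \circ F$, re-applying the units each time to clear side effects induced in $F_1$ and $F_0$, brings $F$ to the form $F_3 + Q$ with $Q$ a quadratic form in $y_{n+1}, \ldots, y_h$ alone.

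Finally, the hypothesis $\emdim(A) = h$ is equivalent to $Ann_R(F) \subseteq \n^2$, i.e.\ the linear independence of $\partial_1 F, \ldots, \partial_h F$. For $i > n$ we have $\partial_i F = \partial_i Q$, so $Q$ must be non-degenerate in $y_{n+1}, \ldots, y_h$; over $K$ algebraically closed of characteristic zero, a linear change in those $h-n$ variables diagonalizes $Q$ to $y_{n+1}^2 + \cdots + y_h^2$, yielding the stated canonical form. The main obstacle lies in the iterative clearing of the preceding paragraph: unit multiplications and changes of variables interact so that each tends to reintroduce terms removed by the other, and the heart of the proof is organizing the descent around a filtration that is strictly decreased by the combined operation.
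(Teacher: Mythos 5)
Note first that the paper does not prove this statement at all: it is quoted verbatim from \cite{ER}, Theorem 4.1. So your sketch must be measured against the actual Elias--Rossi argument, and there it has a genuine gap, located exactly at the step you yourself flag as ``the main obstacle''. The root of the problem is your claim that the available moves are unit multiplication and a $K$--algebra automorphism of $R$ ``equivalently a linear change in $S$''. This equivalence is false: $\mathrm{Aut}(R)$ consists of all formal substitutions $x_i\mapsto \ell_i+\hbox{(higher order terms)}$, and the genuinely nonlinear ones act on dual generators in a way not captured by linear changes of the $y$'s. With only the two moves you actually use, the leftover part of $F_2$ lying in $[S_n]_2$ but outside $\langle \partial_{y_1}F_3,\dots,\partial_{y_n}F_3\rangle$ cannot be cleared: multiplication by a unit $u=1+v$ changes the quadratic component of $F$ only by $v_1\circ F_3\in\langle \partial_{y_i}F_3\rangle$ (where $v_1$ is the linear part of $v$), and a linear change of coordinates transforms each homogeneous component separately, so no combination of these operations ever enlarges the space of removable quadrics.

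A concrete obstruction: take $h=n=2$ and $F=y_1^2y_2+y_2^2$. Then $F_3=y_1^2y_2$ is non-degenerate, the algebra has Hilbert function $(1,2,2,1)$, and the theorem asserts $A_F\cong R/Ann_R(y_1^2y_2)$, i.e.\ that $F$ can be replaced by a homogeneous cubic. But $\langle \partial_{y_1}F_3,\partial_{y_2}F_3\rangle=\langle y_1y_2,\,y_1^2\rangle$ does not contain $y_2^2$, and after any linear change of coordinates $F$ becomes $L^2M+M^2$ with $L,M$ independent linear forms, where $M^2\notin\langle L^2,LM\rangle$; hence no sequence of unit multiplications and linear changes makes the dual generator homogeneous. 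The isomorphism does exist, but it is realized by a nonlinear automorphism of $R$ such as $x_2\mapsto x_2+\lambda x_1^2$, whose dual action creates (or destroys) the $y_2^2$ term by pairing degree-three monomials against $F_3$. This use of nonlinear substitutions, with the non-degeneracy of $F_3$ guaranteeing that enough quadrics can be produced this way, is precisely the missing ingredient and the heart of the proof in \cite{ER}; your final steps (killing $F_0$ and the $S_n$-part of $F_1$ by units, and diagonalizing the residual quadric in $y_{n+1},\dots,y_h$ using $\emdim(A)=h$) are fine, but the descent you describe cannot close with the operations you allow.
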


\bigskip
Starting from the above result we can prove a structure theorem for Artinian  Gorenstein local $K$--algebra $A=R/I$ with maximal ideal $\m$ and socle degree three.

\begin{lemma} \label{ideal} Let $h, n$ be positive integers such that $h>n $ and let $$ F=  F_3 +y_{n+1}^2+ \dots + y_h^2 \in S $$  where  $F_3 $ a  non-degenerate degree three  form  in $S_n.$
Then
$$ Ann_R(F)=  Ann_{R_n}(F_3)R  + (x_i x_j, x_j^2-2 \sigma(x_1,\cdots, x_n))_{i<j, \;n+1\le j  \le h} $$
where $ \sigma\in R_n$ is any  form of degree $3$ such that $ \sigma  \circ F_3 =1.$
\end{lemma}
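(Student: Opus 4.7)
The plan is a dimension argument. Write $J$ for the ideal on the right--hand side of the claimed equality. The target dimension comes from \thmref{class}: the algebra $R/Ann_R(F)$ has Hilbert function $(1,h,n,1)$, hence $\dim_K R/Ann_R(F) = h + n + 2$. It therefore suffices to prove (i) $J \subseteq Ann_R(F)$ and (ii) $\dim_K R/J \le h + n + 2$; together these force the canonical surjection $R/J \to R/Ann_R(F)$ to be an isomorphism. A form $\sigma$ as in the statement exists because $F_3 \neq 0$: some cubic monomial in $x_1,\dots,x_n$ contracts $F_3$ to a nonzero scalar, and one rescales.

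For (i), I would check each type of generator in turn. Elements $g \in Ann_{R_n}(F_3)$ involve only $x_1,\dots,x_n$, whose partials kill $y_j^2$ for $j > n$, so $g \circ F = g \circ F_3 = 0$. For a generator $x_i x_j$ with $i < j$ and $j \ge n+1$, the operator $\partial_{y_i}\partial_{y_j}$ annihilates $F_3$ (since $F_3 \in S_n$ and $j > n$) and each $y_k^2$ (a nonzero contribution would require $i = j = k$). For $j > n$, $x_j^2 \circ F = \partial_{y_j}^2 F = 2$, while $2\sigma \circ F = 2\sigma \circ F_3 = 2$, because the degree-$3$ operator $\sigma$ annihilates the degree-$2$ monomials $y_k^2$. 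Thus $(x_j^2 - 2\sigma) \circ F = 0$ and $J \subseteq Ann_R(F)$.

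For (ii), set $B' := R_n/Ann_{R_n}(F_3)$. By \thmref{class} applied with $h = n$, $B'$ is a graded Artinian Gorenstein algebra with Hilbert function $(1,n,n,1)$, so $\dim_K B' = 2n + 2$. The class of $\sigma$ in $B'$ is a nonzero element of the one--dimensional socle (since $\sigma \circ F_3 = 1$), whence $\sigma \cdot (x_1,\dots,x_n) = 0$ in $B'$. In $R/J$, the relations $x_i x_j$ with $i < j$ and $j \ge n+1$ force every $x_j$ with $j > n$ to annihilate the ideal $(x_1,\dots,x_n)$; in particular $\sigma x_j \equiv 0$ modulo $J$, and hence $x_j^3 = 2\sigma x_j = 0$. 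A routine check on monomials then yields the following normal form: any monomial involving two distinct variables $x_j, x_k$ with $j, k > n$ vanishes modulo $J$; a monomial $m x_j$ with $m \in R_n$ and $j > n$ reduces to $m(0)\, x_j$; a monomial $m x_j^2$ reduces to $2 m \sigma$, which lies in $B'$ and is zero as soon as $\deg m \ge 1$ (since $\sigma$ is in the socle of $B'$); monomials $m x_j^a$ with $a \ge 3$ vanish; and monomials in $R_n$ reduce to elements of $B'$. Consequently $R/J$ is $K$-spanned by a lift of a basis of $B'$ together with $x_{n+1},\dots,x_h$, so $\dim_K R/J \le (2n+2) + (h-n) = h+n+2$, which completes the proof.

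The chief technical obstacle is handling the inhomogeneous relation $x_j^2 - 2\sigma$ in the dimension count; the critical observation is that, although $\sigma$ is nonzero in $B'$, it lies in the socle, so any further multiplication by a variable (whether $x_i$ with $i \le n$ or $x_j$ with $j > n$) kills it, and this collapse is precisely what keeps $R/J$ small enough to match $\dim_K R/Ann_R(F)$.
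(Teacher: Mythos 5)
Your proof is correct, and while it shares the paper's overall skeleton (verify the containment $J\subseteq Ann_R(F)$ generator by generator, then force equality by matching lengths against the value $\dim_K R/Ann_R(F)=h+n+2$ supplied by \thmref{class}), the way you obtain the length bound is genuinely different. The paper never manipulates the inhomogeneous relation $x_j^2-2\sigma$ directly: it passes to the ideal of initial forms, observing that the homogeneous ideal $L=(Ann_{R_n}(F_3))^*+(x_ix_j,x_j^2)_{i<j,\,n+1\le j\le h}$ sits inside $J^*\subseteq I^*$, so $HF_{R/L}\ge HF_{R/J}\ge HF_{R/I}$, and then computes $HF_{R/L}=(1,h,n,1)$ from the fact that $R_n/Ann_{R_n}(F_3)$ has Hilbert function $(1,n,n,1)$. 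You instead bound $\dim_K R/J$ by a direct normal-form reduction of monomials in the inhomogeneous ring, with the socle observation $\sigma\cdot(x_1,\dots,x_n)\subseteq Ann_{R_n}(F_3)$ (plus $x_j^3\equiv 2\sigma x_j\equiv 0$) doing the work that homogenization does for the paper; this is more elementary and self-contained, at the cost of a longer case analysis, whereas the paper's homogeneous trick makes the count immediate and degree-by-degree. Both arguments rest on the same external input, namely that $B'=R_n/Ann_{R_n}(F_3)$ has Hilbert function $(1,n,n,1)$ when $F_3$ is non-degenerate; note only that your citation of \thmref{class} ``with $h=n$'' is not quite the right reference for this (that theorem goes in the opposite direction), but the fact itself follows from Macaulay duality together with the symmetry of the Hilbert function of a graded Artinian Gorenstein algebra, and the paper invokes it in exactly the same unproved form.
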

\begin{proof}   It is easy to check that
$$
I:= Ann_{R} (F) \supseteq J:= Ann_{R_n}(F_3)R   + (x_i x_j, x_j^2- 2 \sigma(x_1,\cdots, x_n))_{i<j, \;n+1\le j \le h}
$$
  Since ${R}/I $ and ${R}/J$ are finitely generated $K$-vector spaces ($\n^4 \subseteq I, J $) and there is a surjection between ${R }/J $ and ${R }/I, $ the equality $I=J$ follows if they have the same colength.  In particular we prove  that $HF_{{R }/I}(i) =HF_{{R }/J}(i) $ for every $i \ge 0.$
If we denote by $(\ )^* $ the homogeneous ideal  generated by the initial forms of an ideal of $R,$  we have
$$
L= (Ann_{R_n}(F_3))^*  +(x_i x_j, x_j^2)_{i<j, \;n+1\le j  \le h} \subseteq J^*  \subseteq I^*
$$
thus
$$
HF_{{R}/L}(i) \ge HF_{R/J} (i) \ge HF_{R/I} (i)
$$
for every $i \ge 0$. In order to prove the assertion we prove  that the Hilbert function of $R/L   $  is  $(1, h, n,1), $ i.e. the Hilbert function of $R/I $ (see  \thmref{class}). The equality easily follows since
   ${R_n}/Ann_{R_n}(F_3) $ has Hilbert function $(1,n,n,1)$, being $F_3$ a  non-degenerate degree three  form  in $S_n $ and $L_2= [Ann_R(F_3)^*]_2.$
\end{proof}

\begin{theorem} \label{structure}
Let $A$ be an Artinian   local $K$--algebra  of embedding dimension $h $ and let $n=HF_A(2).$  \par \vskip 2mm
\noindent $A$ is  Gorenstein of  socle degree three  if and only if  $n \le h$ and there exists   a  non-degenerate cubic form  $ F_3\in S_n$   such that $A\cong R/I$
where
$$
I=\left\{
\begin{array}{ll}
 Ann_{R_n}(F_3)R  + (x_i x_j, x_j^2-2 \sigma(x_1,\cdots, x_n))_{i<j, \;n+1\le j  \le h}  & \text{ if  }   n<h \\ \\
Ann_{R}(F_3) &\   \text{if }    n=h
\end{array}
\right.
$$
 being  $ \sigma\in R_n$ any form of degree $3$ such that $ \sigma  \circ F_3 =1. $
\end{theorem}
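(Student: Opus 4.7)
The plan is to deduce \thmref{structure} from \thmref{class} combined with \lemref{ideal}, treating the "only if" and "if" implications separately and, within each, the two cases $n=h$ and $n<h$. There is essentially no new algebraic content beyond these two earlier results; the work is in unwinding notations and splitting cases correctly.

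For the "only if" direction, I would start by recalling that an Artinian Gorenstein local $K$--algebra $A$ of socle degree three and embedding dimension $h$ has Hilbert function $(1,h,n,1)$ with $n=HF_A(2)\le h$, as already observed right before \thmref{class}. By \thmref{class}, $A\cong R/Ann_R(F)$ with $F=F_3+y_{n+1}^2+\cdots+y_h^2$ and $F_3\in S_n$ a non-degenerate cubic form. In the case $n=h$ the sum over $j=n+1,\dots,h$ is empty, so $F=F_3$ and $I=Ann_R(F_3)$, which is exactly the second alternative in the statement. In the case $n<h$, \lemref{ideal} rewrites $Ann_R(F)$ in the form prescribed by the first alternative, with $\sigma\in R_n$ any degree-three form satisfying $\sigma\circ F_3=1$.

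For the "if" direction, assume $A=R/I$ where $I$ has one of the two forms described in the statement, for some non-degenerate cubic $F_3\in S_n$. If $n=h$ then $I=Ann_R(F_3)$ where $F_3$ is a nonzero polynomial of degree three in $S$, so by Macaulay's correspondence $A$ is Artinian Gorenstein of socle degree three, and the non-degeneracy of $F_3$ forces Hilbert function $(1,h,h,1)$, hence embedding dimension $h$ and $HF_A(2)=h=n$. If $n<h$, \lemref{ideal} identifies $I$ with $Ann_R(F)$ for $F=F_3+y_{n+1}^2+\cdots+y_h^2$, which is a polynomial of degree three in $S$; Macaulay again gives that $A$ is Artinian Gorenstein of socle degree three, and the Hilbert function computation contained in the proof of \lemref{ideal} shows that the Hilbert function is $(1,h,n,1)$, matching the required invariants.

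Since the bulk of the combinatorial content is already packaged in \lemref{ideal} and \thmref{class}, I do not anticipate a genuine obstacle. The only minor point worth checking is that a form $\sigma\in R_n$ with $\sigma\circ F_3=1$ exists in the first place: because $R_n/Ann_{R_n}(F_3)$ has Hilbert function $(1,n,n,1)$, the pairing between degree-three elements of $R_n$ and the one-dimensional space spanned by $F_3$ is nondegenerate, so any cubic $\sigma\in R_n$ not killing $F_3$ can be rescaled to satisfy $\sigma\circ F_3=1$.
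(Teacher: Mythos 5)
Your proposal is correct and follows essentially the same route as the paper: the "only if" direction is obtained from the inequality $n\le h$ (via Iarrobino's decomposition), \thmref{class}, and \lemref{ideal}, while the converse follows from Macaulay's correspondence together with \lemref{ideal}. Your extra remark on the existence of a cubic $\sigma$ with $\sigma\circ F_3=1$ is a harmless additional check that the paper leaves implicit.
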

\begin{proof}
Let $A$ be  an  Artinian Gorenstein local $K$--algebra $A$ with  socle degree three.  The Hilbert function of $A$ is of the form $(1, h ,n,1). $  Due to the decomposition theorem proved in \cite{IAMS}, then $h\ge n$, hence \thmref{class} yields that $A\cong R/Ann_{R}(F)$
where
$$
 F =F_3 +y_{n+1}^2+ \dots + y_h^2 \in S
$$ if $h>n$ and $F=F_3$ if $h=n. $
Then the result follows now by Lemma \ref{ideal}.

\noindent Conversely, the result follows by Macaulay's correspondence and again by  Lemma \ref{ideal}.
\end{proof}

\bigskip
\section{Poincar{\'e} series of Gorenstein local algebras with socle degree three}

In this section we will reduce the computation of the Poincar{\'e} series of Gorenstein local $K$--algebras with socle degree three to the Poincar{\'e} series  of  graded Gorenstein algebras with the same socle degree.
By \cite{Sch}, Satz 1, we may assume that $A$ is an    Artinian Gorenstein algebra.

\vskip 2mm
\noindent We recall that
 if $x\in \m\setminus \m^2$ is  an element in the socle $(0:_A\m)$ of $A$,
then, by  \cite{GL},
\begin{equation}
\label{elsocle}
{ {P}}_A(z)=\frac{{ {P}}_{A/xA}(z)}{1-z \ {  {P}}_{A/xA}(z)}.
\end{equation}

\noindent Moreover if $A$ is an  Artinian   Gorenstein local ring, then, by  \cite{AL},
\begin{equation}
\label{socle}
{ {P}}_A(z)=\frac{{ {P}}_{A/(0:\m)}(z)}{1+z^2 \ { {P}}_{A/(0:\m)}(z)}
\end{equation}

\vskip 2mm \noindent
If  $A=A_F $ with $F=F_3+ \dots $ we will write   the Poincar{\'e} series of $A$ in terms of those of the Artinian Gorenstein graded  $K$--algebra $Q_A(0) \simeq {R}/Ann_{R}(F_3).  $   Notice that if $A$ has  Hilbert function $(1, h, n, 1)$, then $n \le h$ and $Q_A(0) $ is a Gorenstein graded algebra with  Hilbert function $(1, n, n, 1) $.

\begin{theorem} \label{main}  Let $A$ be an Artinian Gorenstein local $K$--algebra with Hilbert function $(1, h, n, 1). $
 Then
 $$
 {P}_A(z)=  \frac{ {P}_{Q_A(0)}(z)}{1 - (h-n) \;  P_{Q_A(0)}(z)}.
$$
In particular $ {P}_A(z)$ is rational if and only if $ {P}_{Q_A(0)}(z)$
 is rational.
\end{theorem}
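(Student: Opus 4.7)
The plan is to pass to the socle-quotients $\bar A:=A/(0:\m)$ of $A$ and $\bar B:=B/(0:\m)$ of $B:=Q_A(0)$, use Theorem~\ref{structure} to describe $\bar A$ as obtained from $\bar B$ by adjoining $h-n$ additional ``socle variables'', iterate (\ref{elsocle}), and finally translate back via (\ref{socle}). Since $A$ and $B$ are Artinian Gorenstein of socle degree three, $(0:\m)$ coincides with $\m^3$ in each, so $\bar A=A/\m^3$ (and analogously for $\bar B$). Formula (\ref{socle}) applied to $A$ and to $B$ gives
\[P_A(z)=\frac{P_{\bar A}(z)}{1+z^2 P_{\bar A}(z)},\qquad P_B(z)=\frac{P_{\bar B}(z)}{1+z^2 P_{\bar B}(z)},\]
so it is enough to prove $P_{\bar A}(z)=P_{\bar B}(z)/(1-(h-n)z\,P_{\bar B}(z))$: eliminating $P_{\bar A}$ and $P_{\bar B}$ from the three relations produces the desired identity, the two $z^2$-corrections cancelling.

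By Theorem~\ref{structure}, $A\cong R/I$ with $I=Ann_{R_n}(F_3)R+(x_ix_j,\,x_j^2-2\sigma)_{i<j,\,n+1\le j\le h}$. Because $\sigma\in\m^3$, reducing modulo $\m^3$ turns each $x_j^2-2\sigma$ into $x_j^2$; combined with $x_ix_j\equiv 0$, this shows that in $\bar A$ every $x_j$ with $n+1\le j\le h$ annihilates $\m$ and hence lies in the socle of $\bar A$, while remaining outside $\m^2$ by the Hilbert function $(1,h,n)$ of $\bar A$. For $n\le k\le h$ let $A_k$ be the algebra furnished by Theorem~\ref{structure} applied to the first $k$ variables (so $A_n=B$ and $A_h=A$), and set $\bar A_k:=A_k/\m^3$. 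A direct inspection of presentations yields $\bar A_k/x_k\bar A_k\cong\bar A_{k-1}$ for each $k>n$: upon reducing by $x_k$ and $\m^3$ simultaneously, the generators $x_ix_k$ collapse to zero while $x_k^2-2\sigma$ disappears (both summands lie in $(x_k)+\m^3$), leaving exactly the presentation of $\bar A_{k-1}$.

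Since $x_k$ is a socle element of $\bar A_k$ lying in $\m\setminus\m^2$, formula (\ref{elsocle}) yields $P_{\bar A_k}(z)=P_{\bar A_{k-1}}(z)/(1-z\,P_{\bar A_{k-1}}(z))$, equivalently $1/P_{\bar A_k}=1/P_{\bar A_{k-1}}-z$. Iterating from $k=n$ up to $k=h$ gives $P_{\bar A}(z)=P_{\bar B}(z)/(1-(h-n)z\,P_{\bar B}(z))$, which combines with the reductions of the first paragraph to yield the stated formula; the rationality equivalence is then immediate. The main obstacle is the verification that $\bar A_k/x_k\bar A_k\cong\bar A_{k-1}$: the essential content is that the relation $x_k^2-2\sigma$, which prevents $x_k$ from being a socle element of $A_k$ itself, becomes trivial after the joint reduction by $x_k$ and $\m^3$, and this relies crucially on the fact that $\sigma$ is of degree three.
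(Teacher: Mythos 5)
Your argument is correct and is essentially the paper's own proof: both rest on Lemma~\ref{ideal}/Theorem~\ref{structure}, the socle formula (\ref{socle}) applied to $A$ and to $Q_A(0)$, and $h-n$ iterations of (\ref{elsocle}) on the socle variables $x_{n+1},\dots,x_h$; you merely reorder the steps, performing both socle reductions first and then linking $A/\m^3$ to $Q_A(0)/\socle(Q_A(0))$ through the intermediate algebras $\bar A_k$, whereas the paper descends from $C=A/(0:\m)$ to $D\cong Q_A(0)/\socle(Q_A(0))$ and only then invokes (\ref{socle}) for $Q_A(0)$. Note that your elimination actually produces $P_A(z)=P_{Q_A(0)}(z)\bigl(1-(h-n)\,z\,P_{Q_A(0)}(z)\bigr)^{-1}$, with a factor $z$ in the denominator; this is also exactly what the paper's own chain of formulas yields, the $z$ being dropped (evidently a typo) in the final display and in the statement, so this is not a defect of your argument.
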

\begin{proof} We recall that   $n \le h. $    By  \thmref{class},  we may assume $n < h$ and $A = {R}/Ann_{R}(F) $ where, as usual, ${R }=K[\![x_1,\dots x_h]\!] $
and $F =F_3 +y_{n+1}^2+ \dots + y_h^2,
 $
 with $F_3 $ a  non-degenerate degree three  form  in $ S_n$.
By  Lemma \ref{ideal}, we know that
$$
Ann_{R}(F)=Ann_{R_n}(F_3)R + (x_i x_j, x_j^2-2  \sigma(x_1,\cdots, x_n))_{i<j, \;n+1\le j  \le h}
$$
where $ \sigma\in S_n$ is a cubic form such that $\sigma \circ F_3 =1.$
So the coset of $ \sigma$ in $A$ is a generator of the socle
$(0:_A \m)$ of $A$, $\m$ being the maximal ideal of $A$.
Hence by  (\ref{socle}) we get that
$$
 {P}_A(z)= \frac{ {P}_C (z)}{1 + z^2 \; {P}_C(z)}
$$
where
$$
C:=\frac{A}{(0:_A \m)}\cong \frac{{R} }{Ann_{R_n}(F_3) R+ ( \sigma)+ (x_i x_j, x_j^2)_{i<j, \;n+1\le j \le h}}.
$$
Since $x_h \in \socle(C)$, by (\ref{elsocle}), we get that
$$
 {P}_C(z)=\frac{ {P}_{C/(x_h)}(z)}{1 -  z \; {P}_{C/(x_h)}(z)}.
$$
Iterating the process we deduce that
$$
 {P}_C(z)=\frac{{ P}_{D}(z)}{1 -  (h-n) z \;  {P}_{D}(z)}.
$$
where
$$
D=\frac{R }{Ann_{R_n}(F_3) R+ ( \sigma) + (x_{n+1},\cdots , x_h)}.
$$
Since $F_3,  \sigma \in S_n$ we get that
$$
D\cong \frac{R_n}{Ann_{R_n}(F_3)+ ( \sigma)}.
$$
Notice that again the coset of $ \sigma$ in $B:=R_n/Ann_{R_n}(F_3) \simeq R/Ann_{R}(F_3) =Q_A(0) $ is a generator of its socle,
so
$$
D\cong \frac{R_n}{Ann_{R_n}(F_3)+ ( \sigma)}\cong \frac{B}{\socle(B)}.
$$
Hence from (\ref{socle}) we deduce
$$
 {P}_D(z)=\frac{ {P}_B}{1 - z^2 \; {P}_B}.
$$
From the above information, summing up,  we get
$$
 {P}_A(z)=\frac{ {P}_B}{1 - (h-n) \;  {P}_B}.
$$
Since $ {P}_A$ is a rational function of $P_B$ it follows that $ {P}_A$ is a rational if and only if the same is true for $P_B$.\end{proof}

\bigskip
The above result reduces the problem of the rationality of $P_A(z) $ to the rationality of the Poincar{\'e} series of a graded Gorenstein $K$--algebra with socle degree three. This situation has been studied by a great number of researchers.    By taking advantage of this we present the following corollaries.

\begin{corollary}
\label{n4}
Let   $A$ be an Artinian Gorenstein local $K$--algebra with Hilbert function $(1, h, n, 1)$.
If $n \le 4, $ then $ {P}_A(z)$ is rational.
\end{corollary}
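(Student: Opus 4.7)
The plan is to combine \thmref{main} with a classical rationality theorem for Gorenstein rings of small embedding codepth. First I would apply \thmref{main}, which reduces the claim to showing that the Poincar\'e series of the associated graded quotient $Q_A(0)$ is rational. By construction (and by the discussion preceding \thmref{class}) $Q_A(0)$ is an Artinian graded Gorenstein $K$--algebra with Hilbert function $(1, n, n, 1)$, so its embedding dimension equals $n \le 4$ and, being Artinian, its depth is zero.

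Next, I would invoke the Avramov--Kustin--Miller theorem (and its companion \cite{JKM}) already recalled in the introduction: every Gorenstein local ring $B$ with $\emdim(B) - \depth(B) \le 4$ has a rational Poincar\'e series. Since
$$
\emdim(Q_A(0)) - \depth(Q_A(0)) = n - 0 = n \le 4,
$$
this hypothesis is satisfied by $Q_A(0)$, so $P_{Q_A(0)}(z)$ is rational.

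Finally, the formula
$$
P_A(z) \;=\; \frac{P_{Q_A(0)}(z)}{1 - (h-n)\, P_{Q_A(0)}(z)}
$$
from \thmref{main} exhibits $P_A(z)$ as a rational function of $P_{Q_A(0)}(z)$, so $P_A(z)$ is rational too.

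The statement is really a harvest of \thmref{main} together with known rationality results, so there is no substantive technical obstacle. The only point to verify is that the graded quotient $Q_A(0)$ produced by Iarrobino's decomposition actually has embedding dimension exactly $n$ (which is automatic from the Hilbert function $(1,n,n,1)$), so that the codepth hypothesis of \cite{AKM} applies. If one preferred a more hands-on argument, the cases $n \le 2$ reduce to complete intersections (handled by Tate \cite{tate}) and $n = 3$ is covered by the Buchsbaum--Eisenbud Pfaffian structure of codimension-three Gorenstein ideals; only $n=4$ genuinely requires the full strength of \cite{AKM}, so citing \cite{AKM, JKM} uniformly seems to be the cleanest presentation.
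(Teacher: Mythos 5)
Your proposal is correct and follows exactly the paper's own argument: reduce to $Q_A(0)$ via \thmref{main} and then apply the rationality results of \cite{AKM} and \cite{JKM} for Gorenstein rings of embedding codepth at most four. The extra verifications you include (embedding dimension of $Q_A(0)$ equals $n$, depth zero) are sound but just elaborate the same two-line proof given in the paper.
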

\begin{proof} By \thmref{main} we may reduce the problem to graded Gorenstein $K$--algebras  with embedding dimension $n \le 4.$ Hence the result follows by  \cite{AKM} and \cite{JKM}. \end{proof}

\vskip 2mm
We recall that a  Koszul graded algebra $C$ has rational Poincar{\'e} series, in particular $P_C(z) H_C(-z) =1 $ where $H_C(z) $ is the Hilbert series of $G, $ i.e.  $  H_C(z)= \sum_{j\ge 0} HF_C(j) z^j.$  The following result specializes,   for Gorenstein local $K$--algebras with socle degree three, a result of Fr\"{o}berg in \cite{froberg}.

\begin{corollary}
\label{n1}
Let $A$ be an Artinian Gorenstein local $K$--algebra   with Hilbert function $(1, h, n, 1)$.
If $Q_A(0) $ is Koszul,  then $$ {P}_A(z)= \frac{1}{n-h+1 -nz+nz^2-z^3} $$
\end{corollary}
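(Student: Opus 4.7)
The plan is a direct computation combining Theorem \ref{main} with the defining property of Koszul algebras.

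First I would invoke Theorem \ref{main}, which gives
$$P_A(z) = \frac{P_{Q_A(0)}(z)}{1 - (h-n)\, P_{Q_A(0)}(z)},$$
so the task reduces to computing $P_{Q_A(0)}(z)$ explicitly. Since by hypothesis $Q_A(0)$ is Koszul, the Fr\"oberg identity yields $P_{Q_A(0)}(z)\, H_{Q_A(0)}(-z) = 1$, whence
$$P_{Q_A(0)}(z) = \frac{1}{H_{Q_A(0)}(-z)}.$$

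Next I would use the fact, recalled in Section 2, that $Q_A(0)$ is the graded Gorenstein quotient of $A$ with Hilbert function $(1,n,n,1)$; this is forced by the assumption that $A$ itself has Hilbert function $(1,h,n,1)$, together with Iarrobino's result that $Q_A(0)$ is the unique graded Gorenstein quotient of $G$ of socle degree $s$. Consequently
$$H_{Q_A(0)}(z) = 1 + nz + nz^2 + z^3, \qquad H_{Q_A(0)}(-z) = 1 - nz + nz^2 - z^3,$$
and therefore $P_{Q_A(0)}(z) = 1/(1 - nz + nz^2 - z^3)$.

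Finally I would substitute this into the formula from Theorem \ref{main} and simplify: the numerator and denominator can be cleared by multiplying top and bottom by $1 - nz + nz^2 - z^3$, giving
$$P_A(z) = \frac{1}{(1 - nz + nz^2 - z^3) - (h-n)} = \frac{1}{n-h+1 - nz + nz^2 - z^3},$$
as required. There is no real obstacle here; the only thing worth double-checking is the sign bookkeeping in the Koszul identity and the confirmation that Theorem \ref{main} applies without the restriction $n < h$ (the case $n=h$ is immediate since then $A = Q_A(0)$ and the formula from Theorem \ref{main} collapses correctly).
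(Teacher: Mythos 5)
Your proof is correct and follows essentially the same route as the paper: reduce via Theorem \ref{main} to $Q_A(0)$, apply the Koszul identity $P_{Q_A(0)}(z)H_{Q_A(0)}(-z)=1$ with $H_{Q_A(0)}(z)=1+nz+nz^2+z^3$, and simplify. Your sign bookkeeping and the remark about the $n=h$ case are both fine.
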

\begin{proof}  By \thmref{main} we may reduce the problem to graded Gorenstein $K$--algebras  which are Koszul. Since $Q_A(0) $ has   Hilbert series $H(z) = 1 +nz+nz^2+z^3, $ then
$ P_{Q_A(0)} H(z)=1 $ and the result follows from an easy computation.
\end{proof}

\noindent In  \cite{CTV}  Koszul filtrations  were  introduced for studying the Koszulness
of a standard graded algebra $C.$   It has been proved in \cite{CTV}, Proposition 1.2 that if $C$ has a  Koszul filtration then all the ideals of the filtration have  a linear $C$--free resolution, hence $C$ is Koszul.
For
Gorenstein graded algebras with Hilbert function $(1, n, n, 1) $, the property of  having a Koszul filtration   can be detected directly on the cubic form (see \cite{CRV} and \cite{conca} Theorem 3.2).

Notice that if  $F_3 $ is a generic cubic, then the corresponding Gorenstein algebra $R /Ann_{R }(F_3)=Q_A(0)  $ has a Koszul filtration (see \cite{CRV}, Theorem 6.3), hence
 $ {P}_A(z)$ is rational.

We say that $A_F$ is a generic  Artinian Gorenstein local $K$--algebra if   $F$ is a generic polynomial of $S. $ By the previous remark we get the following result.

\begin{corollary} \label{generic}
The generic Artinian Gorenstein local $K$--algebra with Hilbert function $(1, h, n, 1)$  has rational Poincar{\'e} series.
\end{corollary}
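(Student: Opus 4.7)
The plan is to chain together Theorem \ref{main} with the genericity statement already quoted from \cite{CRV}. Since the Poincar\'e series is an isomorphism invariant, by Theorem \ref{class} I may assume $A\cong R/\mathrm{Ann}_R(F)$ with
$$F=F_3+y_{n+1}^2+\cdots+y_h^2\in S,$$
$F_3$ a non-degenerate cubic in $S_n$. In this presentation a ``generic'' element among Artinian Gorenstein algebras with Hilbert function $(1,h,n,1)$ corresponds to a generic choice of the cubic form $F_3$, since the summands $y_{n+1}^2+\cdots+y_h^2$ are forced by the Hilbert function via Theorem \ref{class} and \lemref{ideal}.

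Next I would apply \thmref{main} to reduce the question to the graded Gorenstein algebra
$$Q_A(0)\cong R/\mathrm{Ann}_R(F_3),$$
which has Hilbert function $(1,n,n,1)$. By that theorem, $P_A(z)$ is rational if and only if $P_{Q_A(0)}(z)$ is rational, so it suffices to prove the latter for generic $F_3$.

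Now I would invoke the result of Conca--Rossi--Valla (\cite{CRV}, Theorem 6.3), already recalled in the paragraph preceding the statement, which asserts that when $F_3$ is a generic cubic the algebra $R/\mathrm{Ann}_R(F_3)$ admits a Koszul filtration and is therefore Koszul. For a Koszul graded algebra $C$ we have the Fr\"oberg identity $P_C(z)H_C(-z)=1$; applied to $Q_A(0)$ with Hilbert series $H(z)=1+nz+nz^2+z^3$ (as in the proof of \corref{n1}) this gives
$$P_{Q_A(0)}(z)=\frac{1}{1-nz+nz^2-z^3},$$
which is manifestly rational. Feeding this back into the formula of \thmref{main} yields rationality of $P_A(z)$.

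The only non-routine ingredient is the Conca--Rossi--Valla genericity statement that a generic cubic yields a Koszul Gorenstein algebra; granting it, the argument is a direct consequence of the structure theorem and \thmref{main}. I do not expect any serious obstacle beyond correctly identifying the locus of ``generic'' $F$ inside the family of Gorenstein algebras with fixed Hilbert function, which is handled by the uniqueness part of Macaulay duality together with \thmref{class}.
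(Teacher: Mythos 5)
Your argument is correct and follows essentially the same route as the paper: reduce to $Q_A(0)$ via Theorem \ref{main} (after the normal form of Theorem \ref{class}), then invoke \cite{CRV}, Theorem 6.3, that a generic cubic yields a Koszul algebra, and conclude rationality via the Fr\"oberg identity as in Corollary \ref{n1}. The paper states this as an immediate consequence of exactly the remark you cite, so there is nothing to add.
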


\medskip
\begin{remark}
J. Elias and G. Valla proved that if the multiplicity of $A$ is either $d \le 9$ or $d \le h+4, $ then
$ P_A(z) $ is rational (see \cite{EV3}).
If $d\le h+6$ then by \corref{n4} and \cite{EV3} we get that $ P_A(z) $ is rational but the case that
$A$ has Hilbert function $ (1,h,3,1,1)$ is still open.
\end{remark}

\bigskip
\section{Deformations of Gorenstein local algebras with socle degree three}

We start this section by recalling some facts about families of $K$--algebras of fixed degree. Let $d\ge2$ be an integer and let $A$ be an Artinian $K$--algebra of length $d$, hence $A\cong K^d$ as $K$--vector spaces. As explained in the introduction it is interesting to understand weather such an $A$ is smoothable in the following sense.

\begin{definition}
An Artinian $K$--algebra $A$ of length $d$ is smoothable if the scheme $\spec(A)\in \Hilb_d(\mathbb A^d)$ is in the closure $\Hilb_d^{gen}(\mathbb A^d)$ inside $\Hilb_d(\mathbb A^d)$ of the locus of points representing reduced schemes.
\end{definition}

We focus here our attention to Artinian Gorenstein local $K$--algebras $A$ with Hilbert function $(1,h,n,1)$.  In this case we are able to say something interesting using Lemma \ref{ideal}.

\begin{theorem}
\label{smooth}
Let $A$ be an Artinian Gorenstein local $K$--algebra with Hilbert function $(1, h, n, 1). $
Then $A$ is smoothable if $ Q_A(0)$ is smoothable.
\end{theorem}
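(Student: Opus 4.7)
The argument naturally splits into two cases. If $n = h$, then by \thmref{structure} we have $A \cong R/Ann_R(F_3) \cong Q_A(0)$, so the smoothability of $A$ is immediate from that of $Q_A(0)$.

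For the main case $n < h$, my plan is to exhibit $\spec(A)$ as the flat limit of a smoothable family. Starting from a smoothing $\mathcal{Q}$ of $\spec(Q_A(0))$ (which exists by hypothesis) as a family over $\spec K[[t]]$ with reduced generic fibre $\mathcal{Q}_\eta$, I embed $\mathcal{Q}$ inside $\mathbb{A}^h \times \spec K[[t]]$ via the coordinate hyperplane $\{x_{n+1} = \cdots = x_h = 0\}$ and construct the family $\mathcal{A}$ by extending the ideal of $\mathcal{Q}$ with suitably chosen $t$-deformations of the auxiliary generators of $I_A$ described in \lemref{ideal}, namely the products $x_i x_j$ (for $i < j$, $j > n$) and the quadrics $x_j^2 - 2\sigma$.

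The target structure for the generic fibre $\mathcal{A}_\eta$ is the disjoint union of the $2n+1$ reduced points of $\mathcal{Q}_\eta$ lying away from the origin together with a fat point of length $h-n+1$ at the origin, of the form $\spec K[x_{n+1},\ldots,x_h]/\m^2$. A direct count gives $(2n+1)+(h-n+1)=h+n+2=\length(A)$, matching the special fibre. The fat point is smoothable since it is itself the flat limit of $h-n+1$ general points in $\mathbb{A}^{h-n}$ (whose ideal $(x_{n+1}x_{n+2},\ldots,x_{n+1}^2-t x_{n+1},\ldots)$ specializes at $t=0$ to $\m^2$), so $\mathcal{A}_\eta$ would be smoothable, and flatness of $\mathcal{A}$ then places $\spec(A)$ in the closure of smoothable schemes in $\Hilb_{h+n+2}(\mathbb{A}^h)$, proving the claim.

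The main obstacle is in the choice of the deformations above and the verification that the resulting family is flat with the desired special fibre $\spec(A)$. The naive attempt of leaving the original relations $x_i x_j = 0$ and $x_j^2 - 2\sigma = 0$ unchanged succeeds precisely when the smoothing $\mathcal{Q}$ can be arranged so that the socle generator $\sigma$ vanishes on $\mathcal{Q}_\eta \setminus \{0\}$; indeed at a non-origin smoothed point $P_k$ the relation $x_i x_j = 0$ forces $x_j = 0$, and then $x_j^2 - 2\sigma$ is consistent only if $\sigma(P_k)=0$, while at the origin $\sigma$ vanishes automatically as a cubic form and the fibre becomes the desired $\spec K[x_{n+1},\ldots,x_h]/\m^2$. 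When no such smoothing is available, one must instead introduce a $t$-dependent lift $\widetilde\sigma_t \in R_n[[t]]$ of $\sigma$ that specializes to $\sigma$ at $t=0$ and vanishes at the non-origin smoothed points for $t\neq 0$; constructing this lift (using that $\sigma(P_k(t))$ is of order at least $t^3$ since $\sigma$ is cubic and $P_k(t)$ approaches the origin) and checking that the accompanying total family remains flat is the technical heart of the argument.
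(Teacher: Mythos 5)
Your reduction of the case $n=h$ and the overall strategy (build a flat family over a one-dimensional base whose special fibre is $\spec(A)$ and whose general fibre is smoothable, then use closedness of the smoothable locus) are sound, but the proposal has a genuine gap exactly where you yourself locate ``the technical heart'': the construction of the lift $\widetilde\sigma_t$ and the flatness check are not carried out, and the hint you give for why they should work is insufficient. First, for your family to be flat you need the general fibre to have length $h+n+2$, which forces (i) the smoothing of $Q_A(0)$ to admit, after base change, sections $P_1(t),\dots,P_{2n+2}(t)$ with one of them constantly at the origin (a generic smoothing has no rational point at the origin; this is fixable by base change and a $t$-family of translations, but it must be said), and (ii) $\widetilde\sigma_t$ to vanish at the $2n+1$ non-origin sections while specializing to $\sigma$ at $t=0$. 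For (ii) the estimate ``$\sigma(P_k(t))$ has order at least $t^3$'' does not obviously suffice: the points $P_k(t)$ collide at the origin at speed roughly $t$, so any Lagrange-type interpolation of the values $-\sigma(P_k(t))$ at $2n+1$ colliding points introduces denominators of order up to about $t^{2n+1}$, and $3-(2n+1)<0$ already for $n\ge 2$; hence the corrected term need not lie in $t\cdot K[[t]][x_1,\dots,x_n]$ and the special fibre need not be $A$. The difficulty is structural: $\sigma$ generates the socle of $Q_A(0)$, and once you smooth $Q_A(0)$ there is no socle left to which the relations $x_j^2-2\sigma$ can be attached, so deforming $Q_A(0)$ first and dragging the quadric relations along is precisely the hard direction.

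The paper avoids all of this: it never deforms $Q_A(0)$. It argues by induction on $h$, deforming only the single generator $x_h^2-2\sigma$ to $x_h^2-bx_h-2\sigma$. For $b\ne 0$ one checks, via the modular law, that the fibre splits as $R/(x_1,\dots,x_{h-1},x_h-b)\oplus A'\cong K\oplus A'$, where $A'$ is an algebra of the same type (same $F_3$, same $\sigma$) in $h-1$ variables; all fibres have the same length, so the family over ${\mathbb A}^1$ is flat, and smoothability of $K\oplus A'$ follows from the induction hypothesis together with Lemma 4.2 of \cite{CEVV}. The hypothesis that $Q_A(0)$ is smoothable enters only at the base case $h=n$, where $A\cong Q_A(0)$ as in your first paragraph. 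If you want to salvage your approach, you would either have to prove the interpolation/flatness statement for $\widetilde\sigma_t$ in detail, or switch to a peeling-off argument of this kind, which requires no control of the smoothing of $Q_A(0)$ at all.
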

\begin{proof}
We will extend the methods used in \cite{CN1} and \cite{CN2}, proving the statement by induction on $h$.

We will assume the statement true for $h-1$ and we will prove it for $h$. Recall that $A\cong {R}/Ann_{{R}}(F)$ where (see  \thmref{class})
$$
 F =F_3 +y_{n+1}^2+ \dots + y_h^2 \in S=K[y_1, \dots, y_h]
 $$
with $F_3 $ a  non-degenerate degree three  form  in $ K[y_1, \dots ,y_h]$. If $h=n$, then
the ring $A\cong {R}/Ann_{{R}}(F_3)$ is graded Artinian Gorenstein and local with socle in degree $3$ and Hilbert function $(1,n,n,1)$. Thus $A\cong G\cong Q_A(0)$ and the statement is trivial in this case.

We assume the statement is true for $h-1$: we will prove it for $h>n$. Recall that we have
(see \thmref{main})
$$
Ann_{R}(F)=Ann_{R_n}(F_3)R+ (x_i x_j, x_j^2- 2  \sigma(x_1,\cdots, x_n))_{i<j, \;n+1\le j  \le h}
$$
where $s$ is a degree three  form in $ S_n$ such that $ \sigma \circ F_3 =1.$  For the techniques used in this proof,  it will be useful working in the polynomial ring. Notice that in our setting {\bf we may assume}   $R=K[x_1, \dots, x_h] $ because in $R/Ann_{R}(F)$ we have $\m^4=0.$

For each $b\in {\mathbb A}^1$, let us  consider the ideal in $R$
\begin{multline*}
J_b:=Ann_{R_n}(F_3)R +\\(x_ix_j,x_k^2-2 \sigma(x_1,\dots,x_n),x_h^2-bx_h-2 \sigma(x_1,\dots,x_n))_{i<j, \;n+1\le j  \le h, n+1 \le k < h}.
\end{multline*}
Notice that
$$
J_0=Ann_{R}(F)=Ann_{R_n}(F_3)R+ (x_i x_j, x_j^2- 2  \sigma(x_1,\cdots, x_n))_{i<j, \;n+1\le j  \le h},
$$
hence $A\cong R/J_0$.

If $b\ne0$, we claim that
$$
J_b=(x_1,\dots,x_{h-1},x_h-b)\cap(J_b+(x_h^2))\subseteq  R.
$$
Since  $J_b\subseteq (x_1,\dots,x_{h-1},x_h-b),  $  by the modular law (see \cite{AM}, Chapter I), it is enough to prove
$$
(x_1,\dots,x_{h-1},x_h-b)\cap( x_h^2 )=(x_1x_h^2,\dots,x_{h-1}x_h^2,(x_h-b)x_h^2)\subseteq J_b.
$$
This is trivial because $x_i x_h \in J_b $ for every $i=1, \dots {h-1} $ and $x_h^2(x_h-b) \equiv 2 x_h  \sigma(x_1,\dots,x_n)) \equiv 0 $ modulo $J_b.$

If $b\ne0$, then $x_h\in J_b+(x_h^2)$, thus  the ideals $(x_1,\dots,x_{h-1},x_h-b)$ and $J_b+(x_h^2)$ are coprime, whence
$$
A_b:=R/J_b\cong R/(x_1,\dots,x_{h-1},x_h-b)\oplus R/(J_b+(x_h^2)).
$$
Since $x_h\in J_b+(x_h^2)$ we also have
$$
R/(J_b+(x_h^2))\cong A':=R_{h-1}/(Ann_{R_n}(F_3)R+ (x_i x_j, x_j^2- 2  \sigma(x_1,\cdots, x_n))_{i<j, \;n+1\le j  \le h-1}).
$$
By induction hypothesis $A'$ is smoothable. It follows easily that $A_b\cong K\oplus A'$ turns out to be smoothable for $b\ne0$ by induction hypothesis (for reader's benefit see e.g. Lemma 4.2 of \cite{CEVV}).

Let ${\mathcal R}:= K[b]\otimes_K R\cong K[b,x_1,\dots,x_h]$ and  consider the family ${\mathcal A}\cong {\mathcal R}/J\to {\mathbb A}^1\cong\spec(k[b])$ where
\begin{multline*}
J:=Ann_{R_n}(F_3){\mathcal R} +\\ (x_ix_j,x_k^2-2 \sigma(x_1,\dots,x_n),x_h^2-bx_h-2 \sigma(x_1,\dots,x_n))_{i<j, \;n+1\le j  \le h, n+1 \le k < h}.
\end{multline*}
Due to the discussion above all the fibres of ${\mathcal A}\to {\mathbb A}^1$ are Artinian $K$--algebras of degree $d$, thus the family is flat. The universal property of Hilbert scheme guarantees the existence of a curve inside  $\Hilb_d(\mathbb A^d)$ whose general point is in  $\Hilb_d^{gen}(\mathbb A^d)$, thus $\spec(A)\in\Hilb_d^{gen}(\mathbb A^d)$.
\end{proof}

\bigskip
As already noticed above \thmref{smooth} reduces the smoothability of $A$ to the smoothability of a graded Artinian Gorenstein local $K$--algebra with socle degree three (see e.g. \cite{IE}). Our lastresult is the following corollary

\begin{corollary}
\label{n8}
Let   $A$ be an Artinian Gorenstein local $K$--algebra with Hilbert function $(1, h, n, 1)$.
If $n \le 4, $ then $A$ is smoothable.
\end{corollary}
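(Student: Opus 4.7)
The plan is to apply \thmref{smooth} as the main reduction and then dispatch the finitely many remaining cases with existing classification results for low-degree Gorenstein Hilbert schemes.

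First I would invoke \thmref{smooth}, which says that $A$ is smoothable whenever $Q_A(0)$ is smoothable. Recall that $Q_A(0)$ is a graded Artinian Gorenstein $K$--algebra with Hilbert function $(1,n,n,1)$; in particular its length equals $2n+2$, so under the hypothesis $n\le 4$ we are reduced to proving that every graded Artinian Gorenstein $K$--algebra of embedding dimension $n\le 4$ and socle degree three is smoothable in the appropriate affine space.

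Next I would split according to the value of $n$. For $n\le 2$ the length $2n+2\le 6\le 7$, so smoothability follows from the general result (recalled in the introduction) that every Artinian $K$--algebra of length at most $7$ lies in the smoothable component of $\Hilb_d(\mathbb A^h)$ (see \cite{Mazzola1}, \cite{Mazzola2}, \cite{CEVV}). For $n=3$ and $n=4$ the length is $8$ and $10$ respectively, and here one uses the much stronger fact, already quoted in the introduction, that the Gorenstein locus $\Hilb_d^G(\mathbb A^h)$ is irreducible for every $d\le 10$ (see \cite{CN2}, \cite{CN4}); consequently $Q_A(0)$, being Artinian Gorenstein of length at most $10$, is smoothable.

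Combining these two steps, $Q_A(0)$ is smoothable in all cases covered by $n\le 4$, and then \thmref{smooth} transfers smoothability to $A$ itself, completing the argument. There is no real obstacle here: the whole content of the corollary is the reduction provided by \thmref{smooth}, and the residual graded problem is already settled in the literature within the range $d\le 10$. The only thing one might want to double-check is that the references cited cover the full range of embedding dimensions $h\ge n$ that can appear for $Q_A(0)$, but since $Q_A(0)$ has embedding dimension exactly $n\le 4$ the length bound $2n+2\le 10$ is all that is needed to apply the quoted irreducibility of $\Hilb_d^G(\mathbb A^h)$.
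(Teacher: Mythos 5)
Your proposal is correct and follows essentially the same route as the paper: the paper's proof is exactly the reduction via \thmref{smooth} combined with the irreducibility of the Gorenstein locus in degree $\le 10$ from \cite{CN4}, which covers $Q_A(0)$ of length $2n+2\le 10$. Your case split between $n\le 2$ (length $\le 7$) and $n=3,4$ is harmless but unnecessary, since the $d\le 10$ result already covers the whole range.
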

\begin{proof} It follows by Theorem \ref{smooth} and \cite{CN4}.
\end{proof}

We remark that the previous result cannot be generalized for $n \ge 6.$  Indeed Iarrobino  found an example of non smoothable local $K$--algebra $A$ with Hilbert function $(1,6,6,1)$
(see  \cite{CN4}, Section 4).
The case $n=5$ is still open.

\bigskip
\providecommand{\bysame}{\leavevmode\hbox to3em{\hrulefill}\thinspace}

\bigskip

\bigskip
\noindent
Gianfranco Casnati\\
Dipartimento di Matematica\\
Politecnico di Torino\\
Corso Duca degli Abruzzi 24, 10129 Torino, Italy\\
e-mail: {\tt gianfranco.casnati@polito.it}

\bigskip
\noindent
Juan Elias\\
Departament D'{\`A}lgebra i Geometria \\
Facultat de Matem\`{a}tiques\\
Universitat de Barcelona\\
Gran Via 585, 08007 Barcelona, Spain\\
e-mail: {\tt elias@ub.edu}

\bigskip
\noindent
Roberto Notari\\
Dipartimento di Matematica\\
Politecnico di Milano\\
Via Bonardi 9, 20133 Milano, Italy\\
e-mail: {\tt roberto.notari@polimi.it}

\bigskip
\noindent
Maria Evelina Rossi\\
Dipartimento di Matematica\\
Universit{\`a} di Genova\\
Via Dodecaneso 35, 16146 Genova, Italy\\
e-mail: {\tt rossim@dima.unige.it}

\end{document}